\theoremstyle{definition}
\newtheorem*{Theorem}{Theorem 1}
\newtheorem*{acknowledgements}{Acknowledgements}
\def\injects{\lhook\joinrel\rightarrow}
\let\surjects\twoheadrightarrow
\let\iso\cong
\let\normal\triangleleft
\def\<{\raise0.5pt\hbox{$\,\scriptstyle<\,$}}
\def\Q{{\mathbb Q}}
\def\Z{{\mathbb Z}}
\DeclareMathOperator\Aut{Aut}
\DeclareMathOperator\Ind{Ind}
\def\smlneq{\raise1.5pt\hbox{$\,\scriptstyle\lneq\hskip 2.2pt$}}
\def\triv{{\mathbf 1}}
\begin{document}

\llap{.\hskip 10cm} \vskip -10mm

\title{Solomon's induction in quasi-elementary groups}
\author{Tim Dokchitser}
\subjclass[2000]{Primary 19A22; Secondary 20F16}
\begin{abstract}
Given a finite group $G$, we address the following question:
which multiples of the trivial representation
are linear combinations of inductions of
trivial representations from proper subgroups of $G$?
By Solomon's induction theorem, all multiples are
if $G$ is not quasi-elementary.
We complement this by showing that all multiples of $p$ are if $G$ is
$p$-quasi-elementary and not cyclic, and that this is best possible.
\end{abstract}
\maketitle

A finite group $G$ is ($p$-){\em quasi-elementary\/} if it has a cyclic
normal \hbox{subgroup} of $p$-power index.
Solomon's induction theorem
(\cite{Sol} Thm. 1 with ${\mathbf K}=\Q$ or \cite{Isa}~Thm.~8.10) asserts
that the trivial character of
any finite group $G$ is a linear combination of inductions
$$
  \triv_G = \sum_H n_H \Ind_{H}^G\triv_{H},
$$
for some $n_H\in\Z$ and quasi-elementary subgroups $H\<G$ (possibly
with {respect} to different primes.)
In particular, $\triv_G$ is a linear combination of inductions
of $\triv_H$ from proper subgroups $H\<G$ when $G$ is not quasi-elementary.
In this note we show that
this statement is never true
for $\triv_G$ when $G$ is $p$-quasi-elementary, but is always true for $p\triv_G$,
unless $G$ is cyclic.
Both claims are easy to prove, but they do not appear to be in print.

We call a formal linear combination $\sum_H n_H H$ of
(not necessarily proper)
subgroups of $G$
a {\em Brauer relation\/} in $G$ 
if $\sum_H n_H\Ind_H^G\triv_H=0$.

\begin{Theorem}
Let $G$ be a finite group, and let $I\subset \Z$ be the set of integers
that can occur as $n_G$ in Brauer relations $\sum_H n_H H$. Then
\begin{itemize}
\item $I=\{0\}$ if $G$ is cyclic,
\item $I=p\Z$ if $G$ is $p$-quasi-elementary and not cyclic, and
\item $I=\Z$ if $G$ is not quasi-elementary.
\end{itemize}
\end{Theorem}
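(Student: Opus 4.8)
The plan is to begin with the observation that $I$ is a subgroup of $\Z$: Brauer relations form an abelian group under coefficientwise addition, and $\sum_H n_H H\mapsto n_G$ is a homomorphism to $\Z$, so $I=d\Z$ for some $d\ge 0$ and it only remains to pin down $d$ in each case. The third bullet is then immediate from Solomon's theorem as quoted above: when $G$ is not quasi-elementary the displayed expansion of $\triv_G$ involves only proper subgroups, so $1\in I$ and $I=\Z$. For the first bullet, write $G=\langle g\rangle$ and evaluate the character identity $\sum_H n_H\Ind_H^G\triv_H=0$ at $g$. Since $\Ind_H^G\triv_H(g)=\#\{xH: x^{-1}gx\in H\}$ counts the cosets fixed by $g$, and a generator of a cyclic group fixes no coset of a proper subgroup, every term with $H\smlneq G$ vanishes and we are left with $n_G=0$; hence $I=\{0\}$.

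The second bullet splits into two inclusions. For $I\subseteq p\Z$ I would run the same evaluation trick with the cyclic normal subgroup in place of a generator of $G$: let $C\normal G$ be cyclic with $[G:C]=p^k$ and evaluate at a generator $c$ of $C$. Because $\langle c\rangle=C$ is normal, $x^{-1}cx$ again generates $C$, so it lies in $H$ precisely when $C\< H$; thus $\Ind_H^G\triv_H(c)$ equals $[G:H]$ for $H\supseteq C$ and $0$ otherwise, and the relation yields $0=\sum_{H\supseteq C}n_H[G:H]$. Every such $[G:H]$ divides $p^k$, equals $1$ only for $H=G$, and is a multiple of $p$ for every proper $H\supseteq C$; reading this modulo $p$ forces $p\mid n_G$.

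The harder half is the reverse inclusion $p\in I$, i.e.\ exhibiting a relation with $n_G=p$, and this is where I expect the real work to lie. I would argue by induction on $|G|$ via inflation. If $G$ has a nontrivial normal subgroup $N$ with $G/N$ non-cyclic, then $G/N$ is again $p$-quasi-elementary and non-cyclic, so by induction it carries a relation with coefficient $p$ on $G/N$; inflating along $G\surjects G/N$, which sends $\Ind_{\bar H}^{G/N}\triv$ to $\Ind_H^G\triv$ for the proper preimage $H\smlneq G$ and $\triv_{G/N}$ to $\triv_G$, produces the desired relation in $G$. This reduces everything to the groups all of whose proper quotients are cyclic, and one checks these are exactly $C_p\times C_p$ and the Frobenius-type groups $C_\ell\rtimes C_{p^k}$ with $\ell\ne p$ prime and faithful action.

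It then remains to treat these two base families explicitly. For $C_p\times C_p$ the sum over the $p+1$ maximal subgroups gives, after subtracting the regular representation, the relation $p\triv_G=\sum_{[G:H]=p}\Ind_H^G\triv_H-\Ind_{1}^G\triv_{1}$. For $C_\ell\rtimes C_{p^k}$ the point is that the nontrivial characters of the normal $C_\ell$ fall into free orbits under the cyclic $p$-group $P$; a short Clifford-theory bookkeeping of the permutation characters of $P$, its index-$p$ subgroup $P'$, and $C_\ell P'$ yields $p\triv_G=p\Ind_P^G\triv_P-\Ind_{P'}^G\triv_{P'}+\Ind_{C_\ell P'}^G\triv$. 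Combining this with the inclusion of the previous paragraph gives $I=p\Z$. The only genuinely computational step is the character count in the non-abelian base case; the inductive reduction and the other evaluations are short and formal.
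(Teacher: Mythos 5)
Your proposal is correct and follows essentially the same route as the paper: $I$ is an ideal of $\Z$, Solomon's theorem settles the non-quasi-elementary case, the divisibility $I\subseteq p\Z$ comes down to the identity $\sum_{H\supseteq C}n_H[G:H]=0$ (which the paper extracts by Mackey restriction to $C$ and you by evaluating the permutation characters at a generator of $C$ --- the same computation in different clothing), and $p\in I$ is proved by the same induction via inflation from non-cyclic quotients, ending in the identical relation $p\,\triv_G=p\Ind_P^G\triv-\Ind_{P'}^G\triv+\Ind_{C_\ell P'}^G\triv$ for $C_\ell\rtimes C_{p^k}$. The one point where you go beyond the paper is in explicitly listing $C_p\times C_p$ as a second base case and handling it with the relation $\sum_{[G:H]=p}H-\{1\}-p\,G=0$; this is a genuine (if small) improvement, since the paper's reduction tacitly needs the normal subgroup $C\rtimes\Phi(P)$ it constructs to be nontrivial, which fails exactly for $(C_p)^2$.
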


\begin{proof}
Clearly $I$ is an ideal in $\Z$.
It is easy to see that cyclic groups have no non-zero Brauer relations,
whence the first claim. For the last claim, Solomon's induction theorem
shows that $1\in I$ for non-quasi-elementary groups.
Assume from now on that $G$ is $p$-quasi-elementary and not cyclic.
It remains to show that

\begin{itemize}
\item[a)] $p\,\triv_G$ is in $\Z$-span of $\Ind_H^G\triv_H$ for $H\smlneq G$, and
\item[b)] $\triv_G$ is not.
\end{itemize}

Let $C\normal G$ be a cyclic subgroup of $p$-power index.
The elements of $C$ of order prime to $p$ form a cyclic subgroup $C'$
which is characteristic in $C$ and therefore normal in $G$.
Replacing $C$ by $C'$, we may assume that $p\nmid |C|$. Now $G=C\rtimes P$
by the Schur-Zassenhaus theorem, with $P<G$ its $p$-Sylow.

a) We proceed by induction on $|G|$.


If $N\normal G$ is a normal subgroup and $\phi: G\surjects Q=G/N$ the quotient map,
then any Brauer relation $\sum_U n_U U\>\> (U\<Q)$ in $Q$ lifts
to a relation $\sum_U n_U \phi^{-1}(U)$ in $G$.
Also note that $Q$ is $p$-quasi-elementary as well.
Thus if there exists an $N$ with $G/N$ {\em non-cyclic},
we may apply the theorem to $G/N$ (by induction) and lift the resulting
relation back to $G$. Hence assume that there is no such $N$. This implies that

\begin{itemize}
\item {\em $P$ is cyclic.} Otherwise, let
$N=C\rtimes\text{(Frattini subgroup of $P$)}$. Then
$G/N\iso(C_p)^n$ for some $n>1$, which is not cyclic.
\item {\em The action of $P$ on $C$ is non-trivial.} Otherwise $G$ is cyclic.
\item {\em The action of $P$ on $C$ is faithful.} Otherwise $G$ modulo
the kernel of this action is a non-cyclic quotient.
\item {\em $C$ has prime power order}.
Otherwise $C=U_1\times U_2$ with non-trivial $U_1$, $U_2$, and either
$G/U_1$ or $G/U_2$ is a non-cyclic quotient.
\end{itemize}
In particular, because $P$ and $C$ have coprime order and $P\injects\Aut C$,
the order of $C$ cannot be a power of 2.
\begin{itemize}
\item {\em $C$ has prime order}. Otherwise take $U=C_{l^{k-1}}<C_{l^k}=C$. Then
$$
  (\Z/l\Z)^\times \times (\Z/l^{k-1}\Z) \iso
  \Aut(C)\to\Aut(C/U)\iso (\Z/l\Z)^\times
$$
is bijective on elements of order prime to $l$,
so $G$ acts faithfully on $C/U$, and $G/U$ is a non-cyclic quotient.
\end{itemize}
Finally, now $G=C_l\rtimes C_{p^k}$ with faithful action, and it is easy
to check that 
$$
  C_{p^{k-1}}-p\> C_{p^k} - C_l\rtimes C_{p^{k-1}} + p\> G =0
$$
is a Brauer relation. 

b) Let $R=\sum n_H H$ be a Brauer relation.
Restricting each term $\Ind_H^G\triv_H$ to $C$ using Mackey's decomposition,
we find a Brauer relation in $C$, namely
$$
  \sum n_H [G:HC]\> (H \cap C).
$$
As cyclic groups have no non-trivial relations,
all terms, in particular the ones with $C$ must cancel.
These come from subgroups $H\supset C$, that is the ones of the form
$H=C\rtimes P_H$ with $P_H\subset P$. Thus,
$$
  \smash{\sum_{H\supset C}} n_{H} [P:P_H] = 0.
$$
\vskip 3mm
All terms except the one with $P_H=P$ (i.e. $H=G$) are divisible by $p$,
so $n_G$ must be a multiple of $p$. This shows that $n_G\ne 1$.
\end{proof}

\begin{acknowledgements}
This research is supported by a Royal Society University Research fellowship.
The author would like to thank Alex Bartel, Vladimir Dokchitser
and the referee for
helpful comments.
\end{acknowledgements}

\end{document}